\renewcommand{\Bar}{\overline}
\newcommand{\RR}{\mathbb{R}}
\newcommand{\CC}{\mathbb{C}}
\newcommand{\NN}{\mathbb{N}}
\newcommand{\ZZ}{\mathbb{Z}}
\newcommand{\cK}{\mathcal{K}}
\newcommand{\cG}{\mathcal{G}}
\newcommand{\cH}{\mathcal{H}}
\newcommand{\ELL}{\mathcal{L}}
\newtheorem{theorem}{Theorem}
\newtheorem{prop}[theorem]{Proposition}
\newtheorem{lemma}[theorem]{Lemma}
\theoremstyle{remark}
\newtheorem{remark}[theorem]{Remark}
\theoremstyle{definition}
\DeclareMathOperator{\spec}{spec}
\DeclareMathOperator{\dom}{dom}
\DeclareMathOperator{\Res}{Res}
\begin{document}

\title[]{A remark on the discriminant of Hill's equation and~Herglotz functions}

\author{Konstantin Pankrashkin}

\address{Laboratoire de math\'ematiques -- UMR 8628, Universit\'e Paris-Sud, B\^atiment 425, 91405 Orsay Cedex, France}

\email{konstantin.pankrashkin@math.u-psud.fr}

\begin{abstract} We establish a link between the basic properties of the discriminant
of periodic second-order differential equations and an elementary analysis of Herglotz functions.
Some generalizations are presented
using the language of self-adjoint extensions.

\bigskip

\noindent \it This is a preliminary version. The final version will be published in Archiv der Mathematik (Basel).

\end{abstract}

\subjclass[2000]{34B05, 34B24, 47A56, 30D30}

\keywords{Hill's equation, discriminant, Herglotz function, periodic operator, Weyl function, boundary triple}

\maketitle

\section{Introduction}

Let $Q$ be a continuous, real-valued and $1$-periodic function on $\RR$.
The associated \emph{discriminant} (also called \emph{Lyapunov function}
or \emph{characteristic function}) $\Delta:\CC\to\CC$ is defined by
$\Delta(\lambda):=s'(1;\lambda)+c(1;\lambda)$, where $s$ and $c$ are the solutions to the Hill's differential equation
$-u''+Qu=\lambda u$ with  $s(0;\lambda)=c'(0;\lambda)=0$ and $s'(0;\lambda)=c(0;\lambda)=1$.
An important role in the study of Hill's equation is played by the set $\Sigma:=\Delta^{-1}\big((-2,2)\big)$;
in particular, its closure is exactly the spectrum of the Hill operator $L=-d^2/dx^2+Q$ in $L^2(\RR)$.
The discriminant $\Delta$ has remarkable oscillation properties, which are used to understand the structure of $\Sigma$.
More precisely, let us denote by $\mu_n$, $n\ge 1$, the increasing sequence of the Dirichlet
eigenvalues of the operator $u\mapsto -u''+Qu$ on the interval $(0,1)$; alternatively, $\mu_n$ can be viewed as the zeros of the function
$z\mapsto s(1;z)$; then:
\begin{itemize}
\item[(P1)] For any $\lambda\in\RR$ with $\big|\Delta(\lambda)\big|<2$ one has $\Delta'(\lambda)\ne 0$.
\item[(P2)] If $|\Delta(\lambda)|=2$ and $\Delta'(\lambda)=0$, then $\lambda$ coincides with
one of the Dirichlet eigenvalues,
\item[(P3)] For any $n\in\NN$ one has: \emph{either} $\Delta(\mu_n)\le -2$ and $\Delta(\mu_{n+1})\ge 2$
\emph{or}  $\Delta(\mu_n)\ge 2$ and $\Delta(\mu_{n+1})\le -2$.
\item[(P4)] If $\Delta(\mu_n)= 2$ and $\Delta'(\mu_n)=0$ for some $n$, then $\Delta''(\mu_n)< 0$.
\item[(P5)] If $\Delta(\mu_n)=-2$ and $\Delta'(\mu_n)=0$ for some $n$, then $\Delta''(\mu_n)> 0$.
\item[(P6)] The situation of (P4) holds iff all solutions of $-u''+Qu=\mu_n u$  are periodic,
  $u(x+1)=u(x)$.
\item[(P7)] The situation of (P5) holds iff all solutions of $-u''+Qu=\mu_n u$ are anti-periodic, $u(x+1)=-u(x)$.
\end{itemize}
Using (P1)--(P5) one easily can easily see that $\Sigma$ is the union of non-intersecting intervals
$(\alpha_n,\beta_n)$ such that $\mu_n\le \alpha_n<\beta_n\le \mu_{n+1}$. The intervals
$(\beta_n,\alpha_{n+1})$, if non-empty, are usually called \emph{gaps} or \emph{instability intervals}, and
one is often interested in situations in which some instability intervals are empty, and this is
characterized by the properties (P6) and (P7).

The study of the discriminant goes back to Lyapunov~\cite{lyap} who used the fact that
$\Delta$ belongs to a certain class of entire functions. M.~G.~Krein~\cite{Kr2,Kr1} made this approach more explicit,
but asked whether more elementary proofs can be found.
The approach usually presented in textbooks uses the machinery which is specific
to the ordinary differential equations such as oscillation theorems, zeros of the eigenfunctions and the variation of constants,
see e.g. Section VIII.3 in~\cite{CL}, Section 5.6 in~\cite{GTe} or Sections~1.6 and~2.4 and
the related historical remarks in~\cite{brown}, and the analysis of the derivatives of $\Delta$ is particularly involved.
Numerous works suggested various alternative approaches to the analysis of the discriminant, see e.g.~\cite{AC,haupt,HH,mn}.
In the present note we show that there is a simple link between the discriminant
and the holomorphic functions mapping the upper half of the complex plane to itself;
this observation does not seem to be well known. This relationship allows one to give
a new way of deriving the above properties. The new proof seems to be much more elementary
that those available in the literature, it is self-contained, and it does not involve any long
computation and does not use the theory of entire functions.
This is presented in the two subsequent sections, and we tried to keep this part of the presentation
as elementary as possible. Besides the pedagogical value, the approach is suitable
for the study of a larger class of periodic operators for which the previous methods
cannot be applied directly, and this is discussed in the last section.
We remark that after the completion of the present paper we learned about the preprint~\cite{mn}
in which a similar approach was developed.

\section{Some properties of Herglotz functions}

Recall that a  \emph{Herglotz function} is a holomorphic map $h:\CC\setminus\RR \to \CC$
satisfying for all $z$ the properties $h(\Bar z)=\overline{h(z)}$ and $\Im z \cdot \Im h(z)\ge 0$.
Such functions appear in various domains
and are closely related with the measure theory, the spectral theory and the harmonic analysis,
sometimes under different names like Nevanlinna, Stieltjes of $R$-functions,
see e.g. \cite[Section 29]{AG} or~\cite{GT,Arl}.
At reader's convenience we include complete proofs of the following two elementary assertions
which are entirely based on the definition.

\begin{lemma}\label{lem1}
Let $h$ be a non-constant Herglotz function, then:
\begin{itemize}
 \item[(A)] If $h$ is holomorphic near some $\lambda\in\RR$, then  $h'(\lambda)> 0$,
 \item[(B)] If $\lambda\in\RR$ is a pole of $h$, then it is a simple pole, and
 $\Res(h,\lambda)<0$.
\end{itemize}
\end{lemma}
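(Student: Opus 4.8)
The plan is to prove both parts directly from the defining inequality $\Im z \cdot \Im h(z) \ge 0$, exploiting the symmetry $h(\bar z) = \overline{h(z)}$ and the local Taylor/Laurent expansion at the real point $\lambda$. Let me sketch each part.

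For part (A), I want to show $h'(\lambda) > 0$ when $h$ is holomorphic near $\lambda \in \RR$. First I would write the Taylor expansion $h(z) = h(\lambda) + h'(\lambda)(z-\lambda) + O\big((z-\lambda)^2\big)$, noting that the symmetry $h(\bar z) = \overline{h(z)}$ forces $h(\lambda)$ and $h'(\lambda)$ to be real. Setting $z = \lambda + i\varepsilon$ for small $\varepsilon > 0$, I compute $\Im h(z) = h'(\lambda)\varepsilon + O(\varepsilon^2)$, and since $\Im z = \varepsilon > 0$ the Herglotz property gives $\Im h(z) \ge 0$, hence $h'(\lambda)\varepsilon + O(\varepsilon^2) \ge 0$, which yields $h'(\lambda) \ge 0$ after dividing by $\varepsilon$ and letting $\varepsilon \to 0^+$. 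The remaining task is to upgrade $\ge 0$ to strict inequality: if $h'(\lambda) = 0$, I would argue that all higher real derivatives must also behave in a way forbidden by the inequality. The cleanest route is to note that if $h'(\lambda) = 0$ and $h$ is non-constant, the lowest-order nonvanishing term $c_k(z-\lambda)^k$ with $k \ge 2$ would, upon setting $z - \lambda = \varepsilon e^{i\theta}$ with $\theta$ chosen appropriately, produce points where $\Im z > 0$ but $\Im h(z) < 0$, contradicting the Herglotz property; the non-constancy is what guarantees such a term exists.

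For part (B), suppose $\lambda$ is a pole. I would write the Laurent expansion and first show the pole is simple by a reflection argument: the function $g(z) := -1/h(z)$ is again a Herglotz function (composing $z \mapsto h(z)$ with $w \mapsto -1/w$, which maps the upper half-plane to itself), and $g$ is holomorphic near $\lambda$ with $g(\lambda) = 0$. Applying part (A) to $g$ would give $g'(\lambda) > 0$, and unwinding the relation $h = -1/g$ near a simple zero of $g$ shows $h$ has exactly a simple pole with $\Res(h,\lambda) = -1/g'(\lambda) < 0$. I expect the main obstacle to be verifying cleanly that $g = -1/h$ is genuinely Herglotz and holomorphic across $\lambda$ — one must check that $h$ has no real limit that would make $g$ singular, but since $\lambda$ is a pole of $h$, $h$ blows up there and $g$ extends holomorphically with value $0$, so this works. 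An alternative that avoids the auxiliary function is to expand $h(z) = c_{-1}(z-\lambda)^{-1} + (\text{holomorphic})$ directly and test the sign of $\Im h(\lambda + i\varepsilon)$, which forces $c_{-1} < 0$ and simultaneously rules out higher-order negative-index terms by the same angular argument as in part (A).

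The genuinely delicate step in both parts is the passage from a non-strict inequality to a strict one, and the verification that no higher-order term can sneak in while respecting the half-plane constraint. The $-1/h$ trick is attractive because it lets me reduce (B) to (A) and handle both the simplicity of the pole and the sign of the residue in one stroke, so I would structure the final writeup to prove (A) carefully and then deduce (B) as a short corollary.
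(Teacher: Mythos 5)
Your proposal is correct and takes essentially the same approach as the paper: part (A) via the sign of the lowest-order nonvanishing Taylor coefficient tested along rays $z=\lambda+\varepsilon e^{i\theta}$ in the upper half-plane (the paper uses $\theta=\pi/(2k)$ and $\theta=3\pi/(2k)$ for exactly this purpose), and part (B) reduced to (A) through the auxiliary Herglotz function $g(z):=-1/h(z)$, which is precisely the paper's argument. No changes needed.
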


\begin{proof} (A) Take the minimal $k\in\NN$ for which $\alpha:=h^{(k)}(\lambda)\ne 0$.
Clearly, $h(\lambda)$ and $\alpha$ are real. For $\theta\in[-\pi,\pi]$ and $r>0$
we will write $z:=\lambda+r e^{i\theta}$. The Taylor expansion
near $\lambda$ shows that for small $r$ 
the signs of $\Im h(z)$  and $\Im (\alpha r^k e^{ik\theta})=\alpha r^k \sin(k\theta)$ coincide
provided $k\theta\notin\pi\ZZ$.
Let $\theta:=\pi/(2k)$, then $\Im z>0$, which gives $\alpha>0$.
Assume now that $k>1$ and take $\theta:=3\pi/(2k)$, then
$\Im z>0$ and $\alpha r^k \sin(k\theta)=-\alpha r^k$, which gives $\Im h(z)<0$.
This contradiction shows that $k=1$ and proves (A).
In particular, we see that all zeros of $h$ are simple. To prove (B)
it is sufficient to apply (A) to the function $g(z):=-1/h(z)$ which is also Herglotz.
\end{proof}

\begin{lemma}\label{prop1}
Let $I=(\lambda_1,\lambda_2)\subset\RR$ be a non-empty open interval.
Let two functions $m,n$ be holomorphic in $(\CC\setminus\RR)\cup I$ and take real values in $I$.
Assume that
\begin{itemize}
\item $n$ is non-constant,
\item there are $a,b\in\RR$ with $a<b$ such that the two functions
$h_a(z):=\dfrac{m(z)-a}{n(z)}$ and $h_b(z):=\dfrac{m(z)-b}{n(z)}$
are Herglotz and non-constant,
\end{itemize}
then:
\begin{enumerate}
\item[(a)] The zeros of $n$ in $I$ are simple.
\item[(b)] If $\lambda\in I$ is such that $m(\lambda)\in(a,b)$, then $n(\lambda)m'(\lambda)>0$.
\item[(c)] If $n(\lambda)=0$ for some $\lambda\in I$, then $m(\lambda)\notin(a,b)$.
\item[(d)] Let $\mu$ and $\nu$ be successive zeros of $n$ in $I$, then
\emph{either}  $m(\mu)\le a$ and $m(\nu)\ge b$
\emph{or} $m(\mu)\ge b$ and $m(\nu)\le a$.
\item[(e)] If $\lambda\in I$ is such that $m'(\lambda)=0$, then:
\begin{itemize}
\item[(i)] if $m(\lambda)$ coincides with $a$ or $b$, then $n(\lambda)=0$.
\item[(ii)] if $m(\lambda)= a$, then $m''(\lambda)>0$.
\item[(iii)] if $m(\lambda)= b$, then $m''(\lambda)<0$.
\end{itemize}
\end{enumerate}
\end{lemma}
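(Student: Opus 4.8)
The plan is to reduce every part to Lemma~\ref{lem1} applied to the two Herglotz functions $h_a$ and $h_b$, exploiting that they share the denominator $n$ while differing only in their numerators. The single most useful observation is that, since $a\ne b$, the two numerators $m(\lambda)-a$ and $m(\lambda)-b$ cannot both vanish at a point $\lambda\in I$; hence whenever $n(\lambda)=0$, at least one of $h_a,h_b$ genuinely has a pole there. First I would establish (a): at a zero $\lambda$ of $n$ I pick whichever of $h_a,h_b$ has a pole, note by Lemma~\ref{lem1}(B) that this pole is simple, and observe that, since the relevant numerator is holomorphic and nonzero at $\lambda$, the order of the pole equals the order of the zero of $n$; thus $n$ has a simple zero. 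In particular $n'(\lambda)\ne 0$ at each zero, which unlocks the residue formula $\Res(h_a,\lambda)=\bigl(m(\lambda)-a\bigr)/n'(\lambda)$ and its analogue for $h_b$.

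Combining these residue formulas with the sign statement $\Res(\,\cdot\,,\lambda)<0$ of Lemma~\ref{lem1}(B) gives the sign dictionary that drives the rest: if $n(\lambda)=0$ and $m(\lambda)\ne a$, then $\operatorname{sign}\bigl(m(\lambda)-a\bigr)=-\operatorname{sign}\bigl(n'(\lambda)\bigr)$, and likewise with $b$. From here (c) is immediate, since $m(\lambda)\in(a,b)$ at a zero of $n$ would force $n'(\lambda)<0$ (from the $a$-condition) and $n'(\lambda)>0$ (from the $b$-condition) at once. For (d) the dictionary, together with (c), upgrades to the implications $n'(\lambda)>0\Rightarrow m(\lambda)\le a$ and $n'(\lambda)<0\Rightarrow m(\lambda)\ge b$; because $n$ has only simple zeros, $n'$ alternates in sign at successive zeros $\mu<\nu$, and the two possible sign patterns produce exactly the two stated alternatives.

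For (b), at a point $\lambda$ with $m(\lambda)\in(a,b)$ part (c) guarantees $n(\lambda)\ne 0$, so both $h_a,h_b$ are holomorphic near $\lambda$ and Lemma~\ref{lem1}(A) gives $h_a'(\lambda)>0$ and $h_b'(\lambda)>0$. The trick is to form the combination $\bigl(m(\lambda)-b\bigr)h_a'(\lambda)-\bigl(m(\lambda)-a\bigr)h_b'(\lambda)$, which is designed to cancel the $n'$-terms and collapses to $(a-b)m'(\lambda)/n(\lambda)$; reading off the signs ($m(\lambda)-b<0<m(\lambda)-a$ and $a-b<0$) yields $n(\lambda)m'(\lambda)>0$. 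Finally, for (e)(i) I argue by contradiction: if, say, $m(\lambda)=a$ and $m'(\lambda)=0$ but $n(\lambda)\ne 0$, then $h_a$ is holomorphic near $\lambda$ with $h_a(\lambda)=0$ and, by direct differentiation, $h_a'(\lambda)=0$, contradicting Lemma~\ref{lem1}(A) (the case $m(\lambda)=b$ uses $h_b$). For (ii) and (iii), knowing $n(\lambda)=0$ from (i), a short Taylor expansion gives $h_a(z)=\dfrac{m''(\lambda)}{2n'(\lambda)}(z-\lambda)+O\bigl((z-\lambda)^2\bigr)$, so Lemma~\ref{lem1}(A) forces $m''(\lambda)/n'(\lambda)>0$; the sign of $n'(\lambda)$ is then pinned down by the other function (when $m(\lambda)=a$, the pole of $h_b$ has negative residue, forcing $n'(\lambda)>0$), giving $m''(\lambda)>0$, and symmetrically $m''(\lambda)<0$ in case (iii).

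I expect the main obstacle to be bookkeeping of signs and the careful treatment of the boundary cases where $m(\lambda)$ equals $a$ or $b$, so that one of $h_a,h_b$ loses its pole or acquires a zero; the whole scheme rests on the inequality $a\ne b$, which ensures that at every relevant point at least one of the two functions retains the pole/zero structure needed to invoke Lemma~\ref{lem1}.
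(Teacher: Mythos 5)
Your proposal is correct and follows essentially the same route as the paper: every part is reduced to Lemma~\ref{lem1} applied to $h_a$ and $h_b$, using the residue formulas $\Res(h_{a,b},\lambda)=\bigl(m(\lambda)-a\text{ or }b\bigr)/n'(\lambda)$ at the (simple) zeros of $n$, the explicit derivative formulas away from them, and the alternation of the sign of $n'$ at successive zeros. The only differences are cosmetic: your linear combination $\bigl(m(\lambda)-b\bigr)h_a'(\lambda)-\bigl(m(\lambda)-a\bigr)h_b'(\lambda)$ in (b) and your uniform sign dictionary in (d) repackage the same sign bookkeeping that the paper carries out by direct comparison and case distinction.
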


\begin{proof} (a) As the functions $m-a$ and $m-b$ cannot vanish simultaneously,
each zero of $n$ is a pole for $h_a$ or for $h_b$ (or for both). The simplicity of poles proved
in Lemma~\ref{lem1}(B) gives the result.

(b) By contradiction, assume that $n(\lambda)=0$, then $n'(\lambda)\ne 0$ by Lemma~\ref{lem1}(A), and
the residues
\begin{equation}
     \label{eq-res1}
\Res(h_a,\lambda)=\dfrac{m(\lambda)-a}{n'(\lambda)}, \quad
\Res(h_b,\lambda)=\dfrac{m(\lambda)-b}{n'(\lambda)}.
\end{equation}
have opposite signs, which contradicts Lemma~\ref{lem1}(B). Hence, $n(\lambda)\ne 0$ and
\begin{equation}
  \label{eq-hah}
       h'_a(\lambda)=\dfrac{m'(\lambda)}{n(\lambda)}-\dfrac{n'(\lambda)}{n(\lambda)^2}\big(m(\lambda)-a\big),
\quad
h'_b(\lambda)=\dfrac{m'(\lambda)}{n(\lambda)}-\dfrac{n'(\lambda)}{n(\lambda)^2}\big(m(\lambda)-b\big).
\end{equation}
If $m'(\lambda)=0$, then $h'_a(\lambda)h'_b(\lambda)\le 0$, which contradicts Lemma~\ref{lem1}(A).
As the second terms in the expressions for $h'_a$ and $h'_b$ are of opposite signs and both
$h'_a$ and $h'_b$ are positive by Lemma~\ref{lem1}(A), the first fraction must be positive.

(c) is a direct corollary of (b).

(d) Due to the simplicity of zeros proved in (a), the numbers $n'(\mu)$ and $n'(\nu)$
have opposite signs.

Assume first that none of the numbers $m(\mu)$ and $m(\nu)$ coincides
with $a$ or $b$. It follows from \eqref{eq-res1} and Lemma~\ref{lem1}(B)
that $\big(m(\mu)-a\big)/n'(\mu)<0$ and $\big(m(\nu)-b\big)/n'(\nu)<0$, and this means
that $m(\mu)-a$ and $m(\nu)-b$ have opposite signs.
By (c), the numbers $m(\mu)$ and $m(\nu)$ do not belongs to $(a,b)$, which means that
either $m(\mu)<a$ and $m(\nu)>b$ or  $m(\mu)>b$ and $m(\nu)<a$.

Assume now that $m(\mu)=a$. Using \eqref{eq-res1} and Lemma~\ref{lem1}(B)
we obtain $\Res(h_b,\mu)=(a-b)/n'(\mu)<0$, which shows that $n'(\mu)>0$ and $n'(\nu)<0$.
If $m(\nu)=b$, the assertion holds. Otherwise \eqref{eq-res1} and Lemma~\ref{lem1}(B) give
$\Res(h_b,\nu)=\big(m(\nu)-b\big)/n'(\nu)<0$, showing that $m(\nu)-b>0$.
The case $m(\mu)=b$ is considered similarly.

(d) To prove (i) we note that if $n(\lambda)$ was non-zero, then, by~\eqref{eq-hah},
 $h'_a(\lambda)$ or $h'_b(\lambda)$ would vanish, which contradicts Lemma~\ref{lem1}(A).
Let us prove (ii); the reasoning is similar for (iii).
By~Lemma~\ref{lem1}(B), $\Res(h_b,\nu_j)=(a-b)/n'(\lambda)<0$,
which gives $n'(\lambda)>0$. Finally, combining the Taylor expansion
$h_a(z)=\dfrac{m''(\lambda)}{2n'(\lambda)}(z-\lambda) + O(|z-\lambda|^2)$ for$z$ is near $\lambda$.
with Lemma~\ref{lem1}(A) we obtain $m''(\lambda)>0$.
\end{proof}

\begin{remark}
It should be noted that the assumptions of Lemma~\ref{prop1} themselves do not imply the existence of at least two zeros of $n$,
as illustrated by the functions $z\mapsto (z-a)/z$ which are Herglotz for any $a>0$.
\end{remark}

\section{Application to the discriminant}

We are now going to show how the properties (P1)--(P7) can be deduced with the help of Lemma~\ref{prop1}.

For any $\xi\in\CC^2$ and any $z\notin\{\mu_n: n\in\NN\}$
there is a  unique solution of $-u''+Qu=zu$ with $\big(u(0), u(1)\big)=\xi$,
\[
u(x)=u(0)\dfrac{s(1;z)c(x;z)-c(1;z)s(x;z)}{s(1;z)}+u(1)\dfrac{s(x;z)}{s(1;z)}.
\]
As the Wronskian $w(x;z):=s(x;z)c'(x;z)-s'(x;z)c(x;z)$ is constant in~$x$, we infer from
$w(1;z)=w(0;z)=-1$ that
\begin{equation}
       \label{eq-mum}
\begin{pmatrix} u'(0) \\ -u'(1) \end{pmatrix}=M(z)      
\begin{pmatrix} u(0) \\ u(1) \end{pmatrix}, \quad
M(z):=\dfrac{1}{s(1;z)}\begin{pmatrix}
-c(1;z) & 1\\
1 & -s'(1;z)
\end{pmatrix}.
\end{equation}
For $h(z):=\langle M(z)\xi, \xi\rangle\equiv u'(0)\overline{u(0)}-u'(1)\overline{u(1)}=-\displaystyle\int_0^1 \big(u' \Bar u)'(x)dx$ we have
\[
\Im h(z) =-\Im \int_0^1 \Big( \big|u'(x)\big|^2+\big(Q(x)- z\big)\big|u(x)\big|^2\Big)dx= \Im z \int_0^1 \big|u(x)\big|^2dx.
\]
Together with $h(\Bar z)=\overline{h(z)}$ this shows
that $h$ is Herglotz. Furthermore, $\Im h(z)=0$ for real $z$ different from all $\mu_n$, and
for any non-zero $\xi$ we have $\Im h(z)\ne 0$ for $z\notin\RR$, i.e. $h$ is non-constant.
Choosing $\xi=(1,\pm 1)$ we see that the functions $z\mapsto h_\pm(z):=-\big(\Delta(z)\pm 2\big)/s(1;z)$ are Herglotz and non-constant,
and the assertions (P1)--(P5) follow immediately from Lemma~\ref{prop1} with $I=\RR$.

Let us show (P6); the reasoning for (P7) is similar. Clearly, all solutions are periodic if
and only if $s$ and $c$ are periodic. From $w(1,\mu_n)=-1$ and $s(1;\mu_n)=0$ we infer
$s'(1;\mu_n)c(1;\mu_n)=1$. Together with $\Delta(\mu_n)=2$ this gives $s'(1;\mu_n)=c(1;\mu_n)=1$. In particular,
$s(0;\mu_n)=s(1;\mu_n)=0$ and $s'(0;\mu_n)=s'(1;\mu_n)=1$, i.e. $s$ is periodic.
As $c(0;\mu_n)=c(1;\mu_n)=1$ and $c'(0;\mu_n)=0$, the function $c$ is periodic iff $c'(1;\mu_n)=0$.
From $w(1;z)=-1$ we obtain
\[
c'(1;\mu_n)=\lim_{z\to\mu_n} c'(1;z)=\lim_{z\to\mu_n} \dfrac{s'(1;z)c(1;z)-1}{s(1;z)}
=
\lim_{z\to\mu_n} \dfrac{\big(s'(1;z)-1\big)\big(c(1;z)-1\big)}{s(1;z)}
+
\lim_{z\to\mu_n} \dfrac{\Delta(z)-2}{s(1;z)}.
\]
Recall that the zeros of $z\mapsto s(1;z)$ are simple due to Lemma~\ref{prop1}(1).
As both factors in the numerator of the first fraction vanish at $\mu_n$,
the first limit is zero. On the other hand, the second limit equals zero
iff $\mu_n$ is a degenerate zero of $\Delta-2$, which was exactly the claim.

\section{Discriminant and Weyl functions}

In the present section we would like to show how the discriminant $\Delta$
and the associated functions $h_\pm$ appear in an abstract framework
involving the spectral theory of self-adjoint extensions and the boundary triples.
A~very brief account of this machinery is given below, for a more detailed presentation
we refer to the papers~\cite{BGP08,DM91,GG}; an introduction can also be found in
the recent textbook~\cite[Chapters~14 and~15]{ksm}.

Let $\cH$ be a Hilbert space and $S$ be a densely defined symmetric operator in $\cH$.
It is known that $S$ has self-adjoint extensions iff one can construct
a \emph{boundary triple} composed from an auxiliary Hilbert space $\cG$ and two linear maps
$\Gamma,\Gamma'\dom S^*\to\cG$ with the following two properties: (i)
$\langle S^*f,g\rangle-\langle f,S^*g\rangle=\langle \Gamma'f,\Gamma g\rangle -
\langle \Gamma f,\Gamma' g\rangle$ for all $f,g\in\dom S^*$, and (ii)
the mapping $\dom S^*\ni f\mapsto (\Gamma f, \Gamma' f)\in \cG\times\cG$
is surjective. Given a boundary triple, one can describe in a constructive way all
self-adjoint extensions
of~$S$. A special role is played by the self-adjoint extension $H^0$ which
is the restriction of $S^*$ onto $\ker\Gamma$.
Furthermore, one can define the so-called Gamma field $\gamma(z)$
and the Weyl function $M(z)$. The Gamma field $z\mapsto \gamma(z)\in\ELL(\cG,\cH)$ is defined as follows:
for $z\in\CC\setminus\RR$ and $\xi\in\cG$, the vector $u:=\gamma(z)\xi$
is the unique solution of $(S^*-z)u=0$ with $\Gamma u=\xi$.
The Weyl function $M:\CC\setminus\RR\to\ELL(\cG)$ is defined by $M(z):=\Gamma'\gamma(z)$. It is
an operator-valued Herglotz function in the sense that
$z\mapsto \langle M(z)\xi,\xi\rangle$ is a scalar Herglotz function for any $\xi\in\cG$.
Moreover, $M(z)$ extends holomorphically to the whole resolvent set of $H^0$.

Now assume that $S$ is such that one can construct a boundary triple $(\CC^2,\Gamma,\Gamma')$,
$\Gamma f=(\Gamma_1 f,\Gamma_2 f)$, $\Gamma' f=(\Gamma'_1 f,\Gamma'_2 f)$. Then
the Weyl function is just a $2\times 2$ matrix function, $M(z)=\big(m_{jk}(z)\big)$.
Let us construct two new operators. The first one, denoted by $H^0$,
is the restriction of $S^*$ to $\ker \Gamma$, and it is a self-adjoint operator in $\cH$.
The second one, denoted by $L$, acts in $\cK:=\ell^2(\ZZ)\times \cH$ as
$L(f_n)=(S^* f_n)$ on the domain consisting of the vectors $(f_n)\in\cK$, $f_n\in \dom S^*$,
with $(S^*f_n)\in\cK$, satisfying the abstract gluing conditions
\begin{equation}
         \label{eq-bc}
\Gamma_1 f_{n+1}=\Gamma_2 f_n \text{ and }
\Gamma'_1 f_{n+1}+\Gamma'_2 f_n=0 \text{ for all } n\in\ZZ.
\end{equation}
It can be shown that $L$ is a self-adjoint operator in $\cK$, see e.g. \cite[Section~3]{KP12}.
The following result is easily follows from Theorem~3 in~\cite{KP12}:
\begin{prop}\label{prop4}
Assume that the resolvent set of $H^0$ contains a non-empty open interval
$I:=(\lambda_1,\lambda_2)\subset \RR$ in which the coefficient $m_{12}$ is real-valued and does not vanish, then
$\spec L\cap I=\big\{\lambda\in I: \, m(\lambda)\in\spec P\big\}$, where
$m(\lambda):=\dfrac{m_{11}(\lambda)+m_{22}(\lambda)}{m_{12}(\lambda)}$ and
and $P$ is the operator in $\ell^2(\ZZ)$ acting as $P f(n)=f(n-1)+f(n+1)$.
\end{prop}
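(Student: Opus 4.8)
The plan is to deduce the statement from Theorem~3 of \cite{KP12}, which for $\lambda$ in the resolvent set of $H^0$ characterizes $\spec L\cap\rho(H^0)$ as the set of those $\lambda$ for which the boundary operator associated with the gluing conditions~\eqref{eq-bc} and the decoupled Weyl function $\Hat M(\lambda)$ (the block-diagonal operator on $\ell^2(\ZZ)\times\CC^2$ applying $M(\lambda)$ in each cell) fails to be boundedly invertible. The preparatory observation is that, since $I\subset\rho(H^0)$ by hypothesis, $M$ is holomorphic on $I$, and for real $\lambda\in I$ the Lagrange identity (property~(i) of the boundary triple) applied to $u,v\in\ker(S^*-\lambda)$ gives $\langle M(\lambda)\Gamma u,\Gamma v\rangle=\langle \Gamma u,M(\lambda)\Gamma v\rangle$, so that $M(\lambda)$ is Hermitian. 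Combined with the assumptions that $m_{12}$ is real and non-vanishing on $I$, this yields $m_{21}=\Bar{m_{12}}=m_{12}$ and shows that $m_{11},m_{22}$, and hence $m$, are real on $I$.

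The core of the argument is then a short algebraic identification of the boundary operator. For $\lambda\in I$ and $f=(f_n)\in\dom L$ with $Lf=\lambda f$, each component solves $(S^*-\lambda)f_n=0$, hence $f_n=\gamma(\lambda)\Gamma f_n$ and $\Gamma'f_n=M(\lambda)\Gamma f_n$; writing $\Gamma f_n=(\xi_n,\eta_n)$ and $\Gamma'f_n=(\xi'_n,\eta'_n)$ this reads $\xi'_n=m_{11}\xi_n+m_{12}\eta_n$, $\eta'_n=m_{12}\xi_n+m_{22}\eta_n$. The first gluing condition $\Gamma_1 f_{n+1}=\Gamma_2 f_n$ becomes $\xi_{n+1}=\eta_n$, so that all boundary data are encoded in the single sequence $a_n:=\xi_n$ through $\xi_n=a_n$, $\eta_n=a_{n+1}$; this folding is, up to a bounded factor, a unitary identification of the constraint subspace with $\ell^2(\ZZ)$. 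Substituting into the second gluing condition $\Gamma'_1 f_{n+1}+\Gamma'_2 f_n=0$, dividing by $m_{12}(\lambda)\ne 0$, and shifting $n\mapsto n-1$ gives
\[
a_{n+1}+a_{n-1}=-m(\lambda)\,a_n,\qquad\text{that is,}\qquad\big(P+m(\lambda)\big)a=0 .
\]
Thus the boundary operator furnished by Theorem~3 of \cite{KP12} is unitarily equivalent to $m_{12}(\lambda)\big(P+m(\lambda)\big)$ acting on $\ell^2(\ZZ)$.

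To conclude, I would note that since $m_{12}(\lambda)\ne 0$ one has $0\in\spec\big(m_{12}(\lambda)(P+m(\lambda))\big)$ if and only if $0\in\spec\big(P+m(\lambda)\big)$, i.e. $-m(\lambda)\in\spec P$. Finally, the unitary $(Uf)(n)=(-1)^n f(n)$ satisfies $UPU^{-1}=-P$, whence $\spec P=-\spec P$ and therefore $-m(\lambda)\in\spec P$ is equivalent to $m(\lambda)\in\spec P$. Feeding this back into Theorem~3 of \cite{KP12} produces exactly $\spec L\cap I=\big\{\lambda\in I:\ m(\lambda)\in\spec P\big\}$.

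The hard part will not be the algebra but the passage from the formal recurrence---a statement about kernels, hence about eigenvalues---to the full spectral correspondence that also captures the essential spectrum: a solution $a$ of $\big(P+m(\lambda)\big)a=0$ need not lie in $\ell^2(\ZZ)$, and $\lambda$ may lie in $\spec L$ without being an eigenvalue. This is precisely what Theorem~3 of \cite{KP12} is designed to deliver, so the decisive steps are to verify that the gluing conditions~\eqref{eq-bc} satisfy its hypotheses and that the folding $\xi_n=a_n$, $\eta_n=a_{n+1}$ furnishes a genuine unitary equivalence of the resulting boundary operator with a multiple of $P+m(\lambda)$; the computation above then makes the identification, and hence the whole statement, transparent.
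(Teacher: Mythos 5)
Your proposal is correct and takes essentially the same route as the paper: the paper's entire ``proof'' of Proposition~\ref{prop4} is the remark that it follows from Theorem~3 of \cite{KP12}, and your argument is precisely the reduction to that theorem, with the details the paper leaves implicit (Hermiticity of $M(\lambda)$ on $I$, folding the gluing conditions \eqref{eq-bc} into the recurrence $\big(P+m(\lambda)\big)a=0$, and using $\spec P=-\spec P$ to absorb the sign) filled in correctly. No gaps to report.
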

One easily checks that $\spec P=[-2,2]$, which shows that the spectrum of $L$ in the gaps of $H^0$
is characterized by the condition $\big|m(\lambda)\big|\le 2$, and the computation of
$\langle M(z)\xi,\xi\rangle$ for $\xi=(1,\pm 1)$ shows that the functions
$h_\pm(z):=\big(m(z)\pm 2\big)/n(z)$, with $n(z):=1/m_{12}(z)$, are Herglotz.
This allows one to carry out the analysis of the points $\lambda$ at which $m(\lambda)=\pm 2$
using Lemma~\ref{prop1}.

Let us illustrate the preceding abstract machinery with the help of Hill's equation.
Take $\cH:=L^2(0,1)$ and denote by $S$ the operator defined on
$C_c^\infty(0,1)$ and acting as $u\mapsto -u''+Qu$. It is straightforward to check
that the adjoint $S^*$ is given by the same differential expression but acts
on $H^2(0,1)$. As a boundary triple for $S$ one can take $(\CC^2,\Gamma,\Gamma')$
with $\Gamma u=\big(u(0),u(1)\big)$ and $\Gamma' u=\big(u'(0),-u'(1)\big)$,
and the associated Weyl function is exactly $M(z)$ from \eqref{eq-mum}, see \cite[Example~15.3]{ksm}.
The space $\cK:=\ell^2(\ZZ)\otimes \cH$ can be simply viewed as $L^2(\RR)$
if one identifies $(u_n)\in \cK$ with the function $u$ on $\RR$ whose restriction
onto $(n,n+1)$ coincides with $u_n$, then the boundary conditions \eqref{eq-bc}
express simply the continuity of $u$ and $u'$ at each $n\in\ZZ$,
and the associated operator $L$ is the periodic Sch\"odinger operator
$-d^2/dx^2+Q$ on $\RR$. One simply checks that $m(z)=-\Delta(z)$ and $n(z):=-s(1;z)$,
and Proposition~\ref{prop4} shows that outside the set of zeros of $n$
the spectrum of $L$ is characterized by the condition $\Delta(\lambda)\in[-2,2]$.

Indeed, Hill's equation does not exhaust all situations covered by the above abstract scheme.
Admissible examples include e.g. operators with distributional coefficients and quasi-differential
operators~\cite{DM,HM,K,MM}, chains of coupled manifolds or graphs~\cite{BEG,EKW,MSB,KP12} or special matrix operators~\cite{bhsw,hsv}.
In general, the associated discriminant is not a holomorphic function, it can have
poles and more general singularities at the real axis, which depends
on the spectral properties of the respective operator~$H^0$, cf.~\cite{BMN02,BGP08,DM91}.
Our analysis is still applicable at least in the gaps of~$H^0$, as Lemma~\ref{prop1} is of a local nature and does not
involve any assumption on the global behavior of the participating functions.

\subsection*{Acknowledgments}
The work was partially supported by INSMI CNRS and ANR NOSEVOL.
We are grateful to Mark Malamud for bringing the papers~\cite{mm92,mn}
to our attention.

\end{document}